\DeclareMathOperator{\RE}{Re}
\newtheorem{thm}{Theorem}[section]
\newtheorem{cor}[thm]{Corollary}
\newtheorem{rem}[thm]{Remark}
\newtheorem{defn}[thm]{Definition}
\numberwithin{equation}{section}
\title {Harmonic Univalent Functions Defined by  Post Quantum Calculus Operators}
\author[O. P. Ahuja]{Om P. Ahuja}
\address{Department of Mathematics, Kent State University, Burton, OH, USA }
\email{oahuja@kent.edu}
\author[A. \c{C}etinkaya]{Asena \c{C}etinkaya}
\address{Department of Mathematics and Computer Sciences, \.{I}stanbul K\"{u}lt\"{u}r University, \.{I}stanbul, Turkey }
\email{asnfigen@hotmail.com}
\author{V. Ravichandran}
\address{Department of Mathematics, National Institute of Technology Tiruchirappalli-620015, India }
\email{ravic@nitt.edu; vravi68@gmail.com}
\subjclass[2010]{30C50, 30C99, 81Q99 }
\keywords{$(p,q)$-calculus, $q$-calculus, $(p,q)$-S\u al\u agean  harmonic function, S\u al\u agean differential operator.}
\newcommand{\de}{{\mathbb D}}
\begin{document}
\maketitle

\begin{abstract}
We study a  family of  harmonic univalent functions in the open unit disc defined by using  post quantum calculus operators. We first obtained a  coefficient characterization of these functions. Using this, coefficients estimates, distortion and covering theorems were also obtained. The extreme points of the family  and a radius result were also obtained.  The results obtained include several known results as special cases.
\end{abstract}

\maketitle

\section{Introduction}  Let $\mathcal{A}$ be the class of functions $f$ that are analytic in the open unit disc $\de:=\{z:|z|<1\}$ with the normalization $f(0)=f'(0)-1=0$. A function $f\in\mathcal{A}$ can be expressed in the form
\begin{equation}\label{eq2c}
f(z)=z+\sum\limits_{k=2}^\infty a_{k}{z^k},\quad z\in\de.
\end{equation} The theory of $(p,q)$-calculus  (or post quantum calculus)  operators are used in various areas of science and also in the geometric function theory. Let  $0<q\leq p\leq 1$. The $(p,q)$-bracket or twin-basic number $[k]_{p, q}$ for is defined by
$$[k]_{p,q}=\frac{p^k-q^{k}}{p-q}\quad (q\neq p), \quad \text{and }\quad  [k]_{p,p}= kp^{k-1}.$$ Notice that $\lim_{q\to p} [k]_{p,q}=[k]_{p,p}$.  For $0< q \leq 1$, $q$-bracket $[k]_q$ for $ k=0,1,2,\cdots$ is given by
$$[k]_{q}=[k]_{1,q}=\frac{1-q^{k}}{1-q} \quad (q\neq 1), \quad \text{and }\quad  [k]_{1}=[k]_{1,1}= k.$$
The $(p,q)$-derivative operator $D_{p, q} $ of a function $f\in\mathcal{A}$ is given by
\begin{equation}\label{eq2k}
	D_{p, q}f(z)=1+\sum_{{k=2}}^{\infty}[k]_{p,q}a_{k} z^{k-1}.
\end{equation}
For a function $f\in\mathcal{A}$, it can be easily seen that
\begin{equation}\label{eq2}
	D_{p,q}f(z)=\frac{f(pz)-f(qz) }{(p-q)z}, \quad (p\neq q, z\neq 0),
\end{equation}
 $(D_{p,q}f)(0)=1$ and $(D_{p,p}f)(z)=f'(z)$.
For definitions and properties of $(p,q)$-calculus, one may refer to \cite{Chakrabarti1991}. The  $(1,q)$-derivative operator $D_{1, q}$ is known as the  $q$-derivative operator and is denoted by $D_{q}$; for $z\neq 0$, it satisfies
\begin{equation}\label{eq2x}
	(D_{q}f)(z)=\frac{f(z)-f(qz) }{(1-q)z}.
\end{equation}
For definitions and properties of $q$-derivative operator, one may refer to \cite{Ahuja2019, Jackson1909, Jackson1910, JacksonF1910, Ismail1990}.

For a function  $h$ analytic in $\de$ and an integer $m\geq 0$,  we define  the $(p,q)$-S\u al\u agean   differential operator   $L^m_{p,q}$, using $(p,q)$-derivative operator, by
\[
L^0_{p,q}h(z) =h(z) \quad \text{ and }\quad L^m_{p,q}h(z) =zD_{p,q}(L^{m-1}_{p,q}(h(z)).\]     For analytic function $g(z)=\sum\limits_{k=1}^\infty b_{k}{z^k}$,
we have
\begin{equation}\label{eq}
L^m_{p,q}g(z)=\sum_{{k=1}}^{\infty}[k]^m_{p,q}b_{k} z^{k}.
\end{equation}	
In particular, for  $h\in\mathcal{A}$ with $h(z)=z+\sum_{{k=2}}^{\infty} a_{k} z^{k}$, we have
\begin{align}\label{eq1b}L^m_{p,q}h(z)& =z+\sum_{{k=2}}^{\infty}[k]^m_{p,q}a_{k} z^{k}.\end{align}

 Let  $\mathcal{H}$ be the family of complex-valued harmonic functions $f  =h+\overline{g}$ defined in  $\mathbb{D}$, where $h$ and $g$ has the following power series expansion
\begin{equation}\label{eq1}
	h(z)=z+\sum\limits_{k=2}^\infty a_{k}{z^k} \quad\text{and}\quad g(z)=\sum\limits_{k=1}^\infty b_{k}{z^k}.
\end{equation}
Note that $f=h+\overline{g}$  is sense-preserving in  $\mathbb{D}$ if and only if $h'(z)\neq0$ in $\mathbb{D}$ and the second dilatation $w$ of $f$ satisfies the condition $|g'(z)/h'(z)|<1$ in $\mathbb{D}$. Let $\mathcal{S_H}$ be a subclass of functions $f$ in $\mathcal{H}$ that are sense-preserving and univalent in $\de$.  Clunie and Sheil-Small studied the class $\mathcal{S_H}$ in their remarkable  paper \cite{Clunie1984}.
For a survey or comprehensive study of the theory of harmonic univalent functions, one may refer to the papers \cite{Ahuja2005, Ahuja2014, Duren2004}. We introduce and study a new subclass of harmonic univalent functions by using $(p,q)$-S\u al\u agean  harmonic differential operator $L^m_{p,q}:\mathcal{H}\rightarrow \mathcal{H}$.  For the functions in the newly introduced family, a coefficient characterization is obtained (Theorem~\ref{thm1}). Using this, coefficients estimates (Corollary~\ref{cor2}), distortion (Theorem~\ref{thm3}) and covering (Corollary~\ref{cor4}) theorems were also obtained. The extreme points of the family  (Theorem~\ref{rep})  and a radius result (Theorem~\ref{radi}) were also obtained.  The results obtained include several known results as special cases.

\section{Main Results}

We define the $(p,q)$-S\u al\u agean  harmonic differential operator $L^m_{p,q}$ of a  harmonic function  $f=h+\overline{g}\in\mathcal{H}$ by
\begin{align}\label{eq1a}
L^m_{p,q}f(z)&=L^m_{p,q}h(z)+(-1)^m\overline{L^m_{p,q}g(z)}
=z+\sum_{{k=2}}^{\infty}[k]^m_{p,q}a_{k} z^{k}+(-1)^m\sum_{{k=1}}^{\infty}[k]^m_{p,q}\overline{b_{k} z^{k}}.
\end{align}
This last expression is obtained  by using  (\ref{eq1b}) and (\ref{eq}) and is motivated by S\u al\u agean\cite{Salagean1998}.
Recall that convolution (or the Hadamard product) of two complex-valued harmonic functions
$$f_1(z)=z+\sum\limits_{k=2}^\infty a_{1k}{z^k}+\sum\limits_{k=1}^\infty\overline{ b_{1k}z^k}\quad \text{and}\quad
f_2(z)=z+\sum\limits_{k=2}^\infty a_{2k}{z^k}+\sum\limits_{k=1}^\infty\overline{b_{2k}z^k}$$
is defined by
$$f_1(z)\ast f_2(z)=(f_1\ast f_2)(z)=z+\sum\limits_{k=2}^\infty a_{1k}a_{2k}{z^k}+\sum\limits_{k=1}^\infty\overline{ b_{1k}b_{2k}z^k},\quad z\in\de.$$

We now introduce a family of $(p,q)$-S\u al\u agean  harmonic univalent functions by using convolution and the  $(p,q)$-S\u al\u agean  harmonic differential operator $L^m_{p,q}$.
\begin{defn} Suppose $i, j\in\{0,1\}$. Let the function $\Phi_i, \Psi_j$ given by
\begin{equation}\label{eq3}
\Phi_i(z)=z+\sum\limits_{k=2}^\infty \lambda_{k}{z^k}+(-1)^{i}\sum\limits_{k=1}^\infty \mu_{k}{\overline{z}^k},
\end{equation}
\begin{equation}\label{eq3a}	
\Psi_j(z)=z+\sum\limits_{k=2}^\infty u_{k}{z^k}+(-1)^{j}\sum\limits_{k=1}^\infty v_{k}{\overline{z}^k}
\end{equation}
be harmonic in $\de$ with $\lambda_{k}>u_k\geq 0$ $(k\geq2)$ and  $\mu_{k}>v_k\geq 0$ $(k\geq1)$. For $\alpha \in[0,1)$, $0<q\leq p\leq 1$, $m\in \mathbb{N}$, $n\in \mathbb{N}_0$, $m>n$ and $z\in\de$, let $\mathcal{S}_H(m,n,\Phi_i,\Psi_j,p,q,\alpha)$ denote the family of harmonic functions $f$ in $\mathcal{H}$ that satisfy the condition
\begin{equation}\label{eq4}
\RE\bigg\{\frac{(L^m_{p,q}f\ast \Phi_i)(z)}{(L^n_{p,q}f\ast \Psi_j)(z)}\bigg\}> \alpha,
\end{equation}
where  $L^m_{p,q}$ is defined  by (\ref{eq1a}).
\end{defn}
Using (\ref{eq1a}), (\ref{eq3}) and (\ref{eq3a}), we obtain
\begin{equation}\label{eq4z}
(L^m_{p,q}f\ast \Phi_i)(z)=z+\sum\limits_{k=2}^\infty \lambda_k[k]_{p,q}^ma_{k}{z^k}+(-1)^{m+i}\sum\limits_{k=1}^\infty \mu_k[k]_{p,q}^mb_{k}{\overline{z}^k},
\end{equation}
and
\begin{equation}\label{eq4y}
(L^n_{p,q}f\ast \Psi_j)(z)=z+\sum\limits_{k=2}^\infty u_k[k]_{p,q}^na_{k}{z^k}+(-1)^{n+j}\sum\limits_{k=1}^\infty v_k[k]_{p,q}^nb_{k}{\overline{z}^k}.
\end{equation}
\begin{defn}\label{defn2} Let $\mathcal{T}\mathcal{S}_H(m,n,\Phi_i,\Psi_j,p,q,\alpha)$ be the family of harmonic functions $f_m=h+\overline{g}_m\in \mathcal{T}\mathcal{S}_H(m,n,\Phi_i,\Psi_j,p,q,\alpha)$  such that $h$ and $g_m$ are of the form
\begin{equation}\label{eq4a}
h(z)=z-\sum\limits_{k=2}^\infty |a_{k}|{z^k} \quad\text{and}\quad g_m(z)=(-1)^{m+i-1}\sum\limits_{k=1}^\infty |b_{k}|{z^k},\quad |b_1|<1.
\end{equation}
\end{defn}
The families of $\mathcal{S}_H(m,n,\Phi_i,\Psi_j,p,q,\alpha)$  and  $\mathcal{T}\mathcal{S}_H(m,n,\Phi_i,\Psi_j,p,q,\alpha)$ include a variety of well-known subclasses of harmonic functions  as well as many new ones. For example,
\begin{enumerate}
\item[(1)] $\mathcal{S}_H(m,n,\alpha)\equiv \mathcal{S}_H(m,n,\frac{z}{(1-z)^2}-\frac{\overline{z}}{(1-\overline{z})^2},
    \frac{z}{1-z}+\frac{\overline{z}}{1-\overline{z}},1,1,\alpha),$\\ $\mathcal{T}\mathcal{S}_H(m,n,\alpha)\equiv \mathcal{T}\mathcal{S}_H(m,n,\frac{z}{(1-z)^2}-\frac{\overline{z}}
    {(1-\overline{z})^2},\frac{z}{1-z}+\frac{\overline{z}}{1-\overline{z}}
    ,1,1,\alpha)$, \cite{Yalcın 2005}.
\item[(2)]  $\mathcal{S}^\ast_H(\alpha)\equiv \mathcal{S}_H(1,0,\frac{z}{(1-z)^2}-\frac{\overline{z}}
    {(1-\overline{z})^2},\frac{z}{1-z}+\frac{\overline{z}}{1-\overline{z}},1,1,\alpha), $\\
$ \mathcal{T}\mathcal{S}^\ast_H(\alpha)\equiv \mathcal{T}\mathcal{S}_H(1,0,\frac{z}{(1-z)^2}-\frac{\overline{z}}
{(1-\overline{z})^2},\frac{z}{1-z}+\frac{\overline{z}}{1-\overline{z}},1,1,\alpha)$, \cite{Jahangiri1999}.
\item[(3)]  $\mathcal{K}_H(\alpha)\equiv \mathcal{S}_H(2,1,\frac{z+z^2}{(1-z)^3}+\frac{\overline{z}
    +\overline{z}^2}{(1-\overline{z})^3},\frac{z}{(1-z)^2}
    -\frac{\overline{z}}{(1-\overline{z})^2},1,1,\alpha),$\\
$\mathcal{T}\mathcal{K}_H(\alpha)\equiv\mathcal{T}\mathcal{S}_H(2,1,
\frac{z+z^2}{(1-z)^3}+\frac{\overline{z}+\overline{z}^2}{(1-\overline{z})^3},
\frac{z}{(1-z)^2}-\frac{\overline{z}}{(1-\overline{z})^2},1,1,\alpha) $, \cite{Jahangiri1998}.
\item[(4)]  $\mathcal{S}^\ast_{H_q}(\alpha)\equiv\mathcal{S}_H(1,0,\frac{z}{(1-z)^2}
    -\frac{\overline{z}}{(1-\overline{z})^2},\frac{z}{1-z}
    +\frac{\overline{z}}{1-\overline{z}},1,q,\alpha), $\\
$ \mathcal{T}\mathcal{S}^\ast_{H_q}(\alpha)\equiv\mathcal{T}\mathcal{S}_H(1,0,\frac{z}{(1-z)^2}-\frac{\overline{z}}{(1-\overline{z})^2},\frac{z}{1-z}+\frac{\overline{z}}{1-\overline{z}},1,q,\alpha)$, \cite{Ahuja2018}.
\item[(5)]  $\mathcal{K}_{H_q}(\alpha)\equiv\mathcal{S}_H(2,1,\frac{z+z^2}{(1-z)^3}+\frac{\overline{z}+\overline{z}^2}{(1-\overline{z})^3},\frac{z}{(1-z)^2}-\frac{\overline{z}}{(1-\overline{z})^2},1,q,\alpha),$\\
$\mathcal{T}\mathcal{K}_{H_q}(\alpha)\equiv\mathcal{T}\mathcal{S}_H(2,1,\frac{z+z^2}{(1-z)^3}+\frac{\overline{z}+\overline{z}^2}{(1-\overline{z})^3},\frac{z}{(1-z)^2}-\frac{\overline{z}}{(1-\overline{z})^2},1,q,\alpha) $.

\item[(6)]
$\mathcal{S}_H(\Phi_i,\Psi_j,\alpha)\equiv
\mathcal{S}_H(0,0,\Phi_i,\Psi_j,1,1,\alpha),$\\
$\mathcal{T}\mathcal{S}_H(\Phi_i,\Psi_j,\alpha)\equiv \mathcal{T}\mathcal{S}_H(0,0,\Phi_i,\Psi_j,1,1,\alpha)$, \cite{Ravichandran}.
\end{enumerate}
We first prove coefficient conditions for the functions in $\mathcal{S}_H(m,n,\Phi_i,\Psi_j,p,q,\alpha)$ and $\mathcal{T}\mathcal{S}_H(m,n,\Phi_i,\Psi_j,p,q,\alpha)$.

\begin{thm}\label{thm1} Let $f=h+\overline{g}$ be such that $h$ and $g$ are given by (\ref{eq1}). Also, let
\begin{equation}\label{eq5}
\sum\limits_{k=2}^\infty\frac{\lambda_k[k]^m_{p,q}-\alpha u_k[k]^n_{p,q}}{1-\alpha}|a_{k}|
+\sum\limits_{k=1}^\infty\frac{\mu_k[k]^m_{p,q}-(-1)^{n+j-(m+i)}\alpha v_k[k]^n_{p,q}}{1-\alpha}|b_{k}|\leq 1,
\end{equation}
be a given $(p,q)$-coefficient inequality for $\alpha\in[0,1)$, $0<q\leq p\leq 1$, $m\in \mathbb{N}$, $n\in \mathbb{N}_0$, $m>n$,  $\lambda_{k}>u_k\geq 0$ $(k\geq2)$ and  $\mu_{k}>v_k\geq 0$ $(k\geq1)$. Then a function

(i) $f=h+\overline{g}$ given by (\ref{eq1}) is a sense-preserving harmonic univalent functions in $\de$ and $f\in\mathcal{S}_H(m,n,\Phi_i,\Psi_j,p,q,\alpha)$ if the inequality in (\ref{eq5}) is satisfied.

(ii) $f_m=h+\overline{g}_m$ given by (\ref{eq4a}) is in the $\mathcal{T}\mathcal{S}_H(m,n,\Phi_i,\Psi_j,p,q,\alpha)$ if and only if the inequality in (\ref{eq5}) is satisfied.
\end{thm}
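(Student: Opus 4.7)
The plan is to adapt the classical Clunie--Sheil-Small / Jahangiri machinery for harmonic functions with negative coefficients to the $(p,q)$-S\u{a}l\u{a}gean setting. Part (i) splits into three assertions --- sense-preserving, univalent, and the membership condition (\ref{eq4}) --- each proved from the hypothesis (\ref{eq5}). For part (ii), the ``if'' direction follows immediately from (i) since the form (\ref{eq4a}) is a special case of (\ref{eq1}), while the ``only if'' direction will be established by evaluating (\ref{eq4}) on the positive real axis and letting $r\to 1^{-}$.

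For the first two assertions of part~(i), I would begin by verifying that $f$ is sense-preserving: applying the triangle inequality to the power series of $h'$ and $g'$ and combining it with (\ref{eq5}) together with the elementary bounds $\lambda_k[k]^m_{p,q}\ge k$ for $k\ge 2$ and $\mu_k[k]^m_{p,q}\ge 1$ for $k\ge 1$ (which follow from $\lambda_k>u_k\ge 0$, $\mu_k>v_k\ge 0$, and the monotonicity $[k]^m_{p,q}\ge [k]^n_{p,q}$ when $m>n$), one obtains $|g'(z)|<|h'(z)|$ on $\de$. Univalence then follows from the standard argument: if $f(z_1)=f(z_2)$ with $z_1\ne z_2\in\de$, factoring out $z_1-z_2$ and re-applying the same coefficient bound yields a contradiction.

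The main work is (\ref{eq4}). Writing $A(z)=(L^m_{p,q}f\ast\Phi_i)(z)$ and $B(z)=(L^n_{p,q}f\ast\Psi_j)(z)$, I will use the standard equivalence
\[
\RE\frac{A(z)}{B(z)}>\alpha \iff |A(z)-B(z)|<|A(z)+(1-2\alpha)B(z)|,
\]
and then apply the triangle inequality directly to the expansions (\ref{eq4z}) and (\ref{eq4y}). The leading $z$'s cancel in $A-B$ and combine to $2(1-\alpha)z$ in $A+(1-2\alpha)B$; bounding the first above term-by-term and the second below by isolating $2(1-\alpha)|z|$ and using the reverse triangle inequality on the tail, the desired strict inequality reduces, after dividing by $2(1-\alpha)|z|$, to precisely (\ref{eq5}). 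The bookkeeping of the alternating signs $(-1)^{m+i}$ and $(-1)^{n+j}$ is the delicate point and I expect it to be the main obstacle; these signs are exactly what produce the factor $(-1)^{n+j-(m+i)}$ appearing in the $|b_k|$-term of (\ref{eq5}).

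For the ``only if'' direction of part~(ii), with $f_m$ in the form (\ref{eq4a}), the sign $(-1)^{m+i-1}$ deliberately built into $g_m$ is calibrated so that the factors $(-1)^{m+i}$ in (\ref{eq4z}) are absorbed; consequently, for real $z=r\in(0,1)$ both $A(r)$ and $B(r)$ become real power series with coefficients of a single, controllable sign. The condition $\RE(A(z)/B(z))>\alpha$ then specializes at $z=r$ to the real-valued inequality $(A(r)-\alpha B(r))/B(r)>0$; after clearing the positive quantity $B(r)$ and letting $r\to 1^{-}$, the remaining inequality is exactly (\ref{eq5}). The only mild subtlety here is to check that $B(r)$ does not vanish for $r$ sufficiently close to $1$, which follows from the sense-preserving bound established in part~(i).
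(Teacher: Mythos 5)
Your overall route is the same as the paper's: for part (i) you pass from $\RE(A/B)>\alpha$ to a modulus inequality (your form $|A-B|<|A+(1-2\alpha)B|$ is an equivalent rewriting of the paper's $|1-\alpha+w|\geq|1+\alpha-w|$ after clearing $|B|$), expand via (\ref{eq4z})--(\ref{eq4y}), and reduce by the triangle inequality to (\ref{eq5}); for part (ii) you get sufficiency from (i) and necessity by specializing to $z=r$ on the positive real axis and letting $r\to1^-$, exactly as the paper does. The sign bookkeeping you flag as delicate is handled in the paper precisely as you anticipate, with $(-1)^{m+i-1}$ in $g_m$ absorbing the factor $(-1)^{m+i}$ so that all terms on the real axis have controllable sign.

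The one genuine problem is in your justification of the sense-preserving and univalence step. You claim $\lambda_k[k]^m_{p,q}\geq k$ and $\mu_k[k]^m_{p,q}\geq 1$ follow from $\lambda_k>u_k\geq0$, $\mu_k>v_k\geq0$ and a monotonicity $[k]^m_{p,q}\geq[k]^n_{p,q}$. Neither inference is valid under the stated hypotheses: for $0<q\leq p\leq1$ one has $[k]_{p,q}=p^{k-1}+p^{k-2}q+\cdots+q^{k-1}\leq k$, with equality only when $p=q=1$, and $[k]_{p,q}$ can even be less than $1$ (e.g.\ $p=q=0.1$ gives $[2]_{p,q}=0.2$), so $[k]^m_{p,q}\geq[k]^n_{p,q}$ can fail for $m>n$; moreover nothing bounds $\lambda_k$ or $\mu_k$ from below except by $u_k$ and $v_k$, which may both be $0$. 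So (\ref{eq5}) does not, by itself and under only the stated hypotheses, dominate the series $\sum k|a_k|+\sum k|b_k|$ that your univalence and sense-preservation arguments require. To be fair, the paper does not prove this step either --- it dismisses it as ``routine'' with a citation to Nagpal--Ravichandran, where the analogous coefficient functionals do satisfy the needed lower bounds --- but your proposed derivation of those bounds is concretely false as written, and you would need either an additional hypothesis (such as $\lambda_k[k]^m_{p,q}-\alpha u_k[k]^n_{p,q}\geq k(1-\alpha)$ and its analogue for $\mu_k,v_k$) or a different argument to close this gap.
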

\begin{proof}\textit{(i).} Using the techniques used in  \cite{Ravichandran}, it is a routine step to prove that $f=h+\overline{g}$ given by (\ref{eq1}) is sense-preserving and locally univalent in $\de$.
Using the fact $\RE(w)>\alpha$ if and only if $|1-\alpha+w|\geq |1+\alpha-w|$, it suffices to show that
\begin{equation}\label{eq5a}
\bigg|1-\alpha+\frac{(L^m_{p,q}f\ast\Phi_i)(z)}{(L^n_{p,q}f\ast\Psi_j)(z)} \bigg|-\bigg|1+\alpha-\frac{(L^m_{p,q}f\ast\Phi_i)(z)}{(L^n_{p,q}f\ast\Psi_j)(z)} \bigg|\geq 0
\end{equation}
In view of (\ref{eq4z}) and (\ref{eq4y}), left side of (\ref{eq5a}) yields
\begin{align*}
\big|(L^m_{p,q}f\ast\Phi_i)(z)&+(1-\alpha)(L^n_{p,q}f\ast\Psi_j)(z)\big|-\big|(L^m_{p,q}f\ast\Phi_i)(z)-(1+\alpha)(L^n_{p,q}f\ast\Psi_j)(z)\big|\\
&=\bigg|(2-\alpha)z+\sum\limits_{k=2}^\infty(\lambda_k[k]^m_{p,q}+(1-\alpha)u_k[k]^n_{p,q})a_kz^k\\
&\quad\quad+(-1)^{m+i}\sum\limits_{k=1}^\infty(\mu_k[k]^m_{p,q}+(-1)^{n+j-(m+i)}(1-\alpha)v_k[k]^n_{p,q})b_k\overline{z}^k\bigg|\\
&\quad\quad-\bigg|-\alpha z+\sum\limits_{k=2}^\infty(\lambda_k[k]^m_{p,q}-(1+\alpha)u_k[k]^n_{p,q})a_kz^k\\ &\quad\quad+(-1)^{m+i}\sum\limits_{k=1}^\infty(\mu_k[k]^m_{p,q}-(-1)^{n+j-(m+i)}(1+\alpha)v_k[k]^n_{p,q})b_k\overline{z}^k\bigg|\\
&\geq (2-2\alpha)|z|-2\sum\limits_{k=2}^\infty(\lambda_k[k]^m_{p,q}-\alpha u_k[k]^n_{p,q})|a_k||z|^{k}\\
&\quad\quad-\sum\limits_{k=1}^\infty(\mu_k[k]^m_{p,q}+(-1)^{n+j-(m+i)}(1-\alpha)v_k[k]^n_{p,q})|b|_k|z|^{k}\\
&\quad\quad-\sum\limits_{k=1}^\infty(\mu_k[k]^m_{p,q}-(-1)^{n+j-(m+i)}(1+\alpha)v_k[k]^n_{p,q})|b|_k|z|^{k}\\
& \geq \big(1-\alpha\big)|z|\bigg[1-\sum\limits_{k=2}^\infty\frac{\lambda_k[k]^m_{p,q}-\alpha u_k[k]^n_{p,q}}{1-\alpha}|a_{k}||z|^{k-1}\\
&\quad-\sum\limits_{k=1}^\infty\frac{\mu_k[k]^m_{p,q}-(-1)^{n+j-(m+i)}\alpha v_k[k]^n_{p,q}}{1-\alpha}|b_{k}||z|^{k-1}\bigg]\\
&> \big(1-\alpha\big)|z|\bigg[1-\bigg(\sum\limits_{k=2}^\infty\frac{\lambda_k[k]^m_{p,q}-\alpha u_k[k]^n_{p,q}}{1-\alpha}|a_{k}|
\\
&\quad +\sum\limits_{k=1}^\infty\frac{\mu_k[k]^m_{p,q}-(-1)^{n+j-(m+i)}\alpha v_k[k]^n_{p,q}}{1-\alpha}|b_{k}|\bigg)\bigg].
\end{align*}
This last expression is non-negative because of the condition  given in (\ref{eq5}). This completes the proof of part (i) of theorem.

\textit{(ii).} Since $\mathcal{T}\mathcal{S}_H(m,n,\Phi_i,\Psi_j,p,q,\alpha)\subset \mathcal{S}_H(m,n,\Phi_i,\Psi_j,p,q,\alpha)$, the sufficient part of part (ii) follows from part (i).

In order to prove the necessary part of part (ii), we assume that $f_m\in\mathcal{T}\mathcal{S}_H(m,n,\Phi_i,\Psi_j,p,q,\alpha)$. We notice that
\begin{align*}
& \RE \bigg\{\frac{(L^m_{p,q}f\ast \Phi_i)(z)}{(L^n_{p,q}f\ast \Psi_j)(z)}- \alpha\bigg\}\\
&=
\RE \bigg\{\frac{(1-\alpha)z-\sum\limits_{k=2}^\infty(\lambda_k[k]^m_{p,q}-\alpha u_k[k]^n_{p,q})a_kz^k}{z-\sum\limits_{k=2}^\infty u_k[k]^n_{p,q}a_{k}{z^k}+(-1)^{m+i+n+j-1}\sum\limits_{k=1}^\infty v_k[k]^n_{p,q}b_{k}{\overline{z}^k}}\\
&\quad\quad+\frac{(-1)^{2m+2i-1}\sum\limits_{k=1}^\infty(\mu_k[k]^m_{p,q}-(-1)^{n+j-(m+i)}\alpha v_k[k]^n_{p,q})b_k\overline{z}^k}{z-\sum\limits_{k=2}^\infty u_k[k]^n_{p,q}a_{k}{z^k}+(-1)^{m+i+n+j-1}\sum\limits_{k=1}^\infty v_k[k]^n_{p,q}b_{k}{\overline{z}^k}}\bigg\}\\
&\geq\frac{(1-\alpha)-\sum\limits_{k=2}^\infty(\lambda_k[k]^m_{p,q}-\alpha u_k[k]^n_{p,q})a_kr^{k-1}-\sum\limits_{k=1}^\infty(\mu_k[k]^m_{p,q}-(-1)^{n+j-(m+i)}\alpha v_k[k]^n_{p,q})b_kr^{k-1}}{1-\sum\limits_{k=2}^\infty u_k[k]^n_{p,q}a_{k}{r^{k-1}}-(-1)^{m+i+n+j}\sum\limits_{k=1}^\infty v_k[k]^n_{p,q}b_{k}{r^{k-1}}}\\
&\geq 0,
\end{align*}
by (\ref{eq4}). The above inequality must hold for all $z\in\de.$ In particular, choosing the values of $z$ on the positive real axis and $z\rightarrow 1^-$, we obtain the required condition (\ref{eq5}). This completes the proof of part (ii) of theorem.

The harmonic  mappings
\begin{equation}
f(z)=z+\sum\limits_{k=2}^\infty\frac{1-\alpha}{\lambda_k[k]^m_{p,q}-\alpha u_k[k]^n_{p,q}}x_kz^k+\sum\limits_{k=1}^\infty\frac{1-\alpha}{\mu_k[k]^m_{p,q}-(-1)^{n+j-(m+i)}\alpha v_k[k]^n_{p,q}}y_k\overline{z}^k,
\end{equation}
where $\sum\limits_{k=2}^\infty|x_k|+\sum\limits_{k=1}^\infty|y_k|=1$, show that the coefficient bound given by (\ref{eq5}) is sharp.
\end{proof}

Theorem \ref{thm1} also  yields the following corollary.
\begin{cor}\label{cor2} For  $f_m=h+\overline{g}_m$  given by (\ref{eq4a}), we have
$$|a_k|\leq \frac{1-\alpha}{\lambda_k[k]^m_{p,q}-\alpha u_k[k]^n_{p,q}}, k\geq 2\quad\text{and}\quad |b_k|\leq \frac{1-\alpha}{\mu_k[k]^m_{p,q}-(-1)^{n+j-(m+i)}\alpha v_k[k]^n_{p,q}}, k\geq1.$$
The result is sharp for each $k$.
\end{cor}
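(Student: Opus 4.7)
The plan is to extract the bounds directly from part (ii) of Theorem~\ref{thm1}. Since $f_m\in\mathcal{T}\mathcal{S}_H(m,n,\Phi_i,\Psi_j,p,q,\alpha)$, that theorem guarantees the coefficient inequality (\ref{eq5}). Under the standing hypotheses $0\le \alpha<1$, $\lambda_k>u_k\ge 0$, and $\mu_k>v_k\ge 0$, and because $[k]_{p,q}^m\ge [k]_{p,q}^n$ for $m>n$ with $0<q\le p\le 1$, every coefficient multiplier
\[
\frac{\lambda_k[k]^m_{p,q}-\alpha u_k[k]^n_{p,q}}{1-\alpha}\quad\text{and}\quad \frac{\mu_k[k]^m_{p,q}-(-1)^{n+j-(m+i)}\alpha v_k[k]^n_{p,q}}{1-\alpha}
\]
appearing in (\ref{eq5}) is strictly positive. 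Hence each individual term of the two series in (\ref{eq5}) is nonnegative.

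Next, I would isolate a single term. Fixing any $k_0\ge 2$ and dropping all other (nonnegative) summands from the left side of (\ref{eq5}) yields
\[
\frac{\lambda_{k_0}[k_0]^m_{p,q}-\alpha u_{k_0}[k_0]^n_{p,q}}{1-\alpha}|a_{k_0}|\le 1,
\]
which, upon rearrangement, gives the desired estimate for $|a_{k_0}|$. An identical argument applied to a single term of the second series yields the asserted bound for $|b_{k_0}|$ for any $k_0\ge 1$.

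For sharpness I would display, for each fixed $k$, the explicit extremal functions
\[
f(z)=z-\frac{1-\alpha}{\lambda_k[k]^m_{p,q}-\alpha u_k[k]^n_{p,q}}\,z^k
\]
and
\[
f(z)=z+(-1)^{m+i-1}\,\frac{1-\alpha}{\mu_k[k]^m_{p,q}-(-1)^{n+j-(m+i)}\alpha v_k[k]^n_{p,q}}\,\overline{z}^k,
\]
which are special cases of the general extremal family exhibited at the end of the proof of Theorem~\ref{thm1} (take $x_k=1$, all other $x_\ell,y_\ell=0$, or $y_k=1$, all other $x_\ell,y_\ell=0$, respectively, up to sign convention to match (\ref{eq4a})). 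For these choices the coefficient inequality (\ref{eq5}) becomes an equality, so by Theorem~\ref{thm1}(ii) they lie in $\mathcal{T}\mathcal{S}_H(m,n,\Phi_i,\Psi_j,p,q,\alpha)$, and they attain the stated bound with equality.

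There is no real obstacle: the entire content of the corollary is a single-term consequence of the series bound already established in Theorem~\ref{thm1}(ii), and the sharpness assertion is witnessed by the monomial extremal functions already identified within that theorem's proof. The only point requiring a line of care is verifying that each coefficient in (\ref{eq5}) is positive, so that dropping other terms preserves the inequality; this follows from the assumptions $\lambda_k>u_k\ge 0$, $\mu_k>v_k\ge 0$, and $\alpha\in[0,1)$ together with $m>n$ and $[k]_{p,q}\ge 1$ for $k\ge 1$.
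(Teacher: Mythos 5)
Your proof is correct and is exactly the argument the paper intends: the corollary is stated without proof as an immediate consequence of Theorem \ref{thm1}(ii), obtained by noting that every term of the two series in (\ref{eq5}) is nonnegative, dropping all but one term, and rearranging, with sharpness witnessed by the single-term extremal functions already displayed at the end of the proof of Theorem \ref{thm1}. One small caution: your auxiliary claim that $[k]_{p,q}\ge 1$ for all $k\ge 1$ is false when $p<1$ (e.g.\ $[3]_{1/2,1/2}=3/4$), so the positivity of the multipliers does not follow from that route; however, this positivity is already presupposed by the paper's own formulation of (\ref{eq5}) and of the extremal functions, so it does not separate your argument from the paper's.
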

Using Theorem \ref{thm1} (part ii), it is easily seen that the class  $\mathcal{T}\mathcal{S}_H(m,n,\Phi_i,\Psi_j,p,q,\alpha)$ is convex and closed with respect to the topology of locally uniform convergence so that the closed convex hulls of $\mathcal{T}\mathcal{S}_H(m,n,\Phi_i,\Psi_j,p,q,\alpha)$
equals itself. The next theorem determines the extreme points of  $\mathcal{T}\mathcal{S}_H(m,n,\Phi_i,\Psi_j,p,q,\alpha)$.
\begin{thm}\label{rep} Let $f_m=h+\overline{g}_m$ be given by (\ref{eq4a}). Then $f_m\in \operatorname{clco} \mathcal{T}\mathcal{S}_H(m,n,\Phi_i,\Psi_j,p,q,\alpha)$ if and only if $f_m(z)=\sum\limits_{k=1}^\infty(x_kh_k(z)+y_kg_{m_k}(z))$, where
	$$h_1(z)=z,\quad h_k(z)=z-\frac{1-\alpha}{\lambda_k[k]^m_{p,q}-\alpha u_k[k]^n_{p,q}}z^k,(k\geq2),$$ $$g_{m_k}(z)=z+(-1)^{m+i-1}\frac{1-\alpha}{\mu_k[k]^m_{p,q}-(-1)^{n+j-(m+i)}\alpha v_k[k]^n_{p,q}}\overline{z}^k, (k\geq1),$$
	and $\sum\limits_{k=1}^\infty(x_k+y_k)=1$ where  $x_k\geq 0$ and $ y_k\geq0$. In particular, the extreme points of $\mathcal{T}\mathcal{S}_H(m,n,\Phi_i,\Psi_j,p,q,\alpha)$ are $\{h_k\}$ and $\{g_{m_k}\}$. 	
\end{thm}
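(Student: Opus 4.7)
The plan is to characterize extreme points via the coefficient inequality in Theorem~\ref{thm1}(ii), which is the defining condition for membership in $\mathcal{T}\mathcal{S}_H(m,n,\Phi_i,\Psi_j,p,q,\alpha)$. Since that class is already closed and convex (as noted in the paragraph preceding the statement), $\operatorname{clco}$ of it equals the class itself, so the biconditional reduces to: $f_m$ is in the class if and only if it admits the stated representation.

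For the sufficiency direction I would take an arbitrary $f_m(z)=\sum_{k=1}^\infty(x_k h_k(z)+y_k g_{m_k}(z))$ with $x_k,y_k\geq 0$ and $\sum_{k=1}^\infty(x_k+y_k)=1$, collect the $z^k$ and $\overline{z}^k$ coefficients to read off
\[
|a_k|=\frac{(1-\alpha)\,x_k}{\lambda_k[k]^m_{p,q}-\alpha u_k[k]^n_{p,q}},\qquad |b_k|=\frac{(1-\alpha)\,y_k}{\mu_k[k]^m_{p,q}-(-1)^{n+j-(m+i)}\alpha v_k[k]^n_{p,q}},
\]
and plug these into the left-hand side of (\ref{eq5}). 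The denominators cancel cleanly, leaving $\sum_{k\geq 2}x_k+\sum_{k\geq 1}y_k=1-x_1\leq 1$, so Theorem~\ref{thm1}(ii) gives $f_m\in\mathcal{T}\mathcal{S}_H(m,n,\Phi_i,\Psi_j,p,q,\alpha)$.

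For the necessity direction, given $f_m=h+\overline{g}_m$ of the form (\ref{eq4a}) in the class, I would invert the cancellation above by \emph{defining}
\[
x_k:=\frac{\lambda_k[k]^m_{p,q}-\alpha u_k[k]^n_{p,q}}{1-\alpha}\,|a_k|\ (k\geq 2),\quad y_k:=\frac{\mu_k[k]^m_{p,q}-(-1)^{n+j-(m+i)}\alpha v_k[k]^n_{p,q}}{1-\alpha}\,|b_k|\ (k\geq 1),
\]
and $x_1:=1-\sum_{k\geq 2}x_k-\sum_{k\geq 1}y_k$. Theorem~\ref{thm1}(ii) guarantees $x_1\geq 0$ while the defining $(p,q)$-coefficients make $x_k,y_k\geq 0$; direct substitution then shows that $\sum_{k=1}^\infty(x_k h_k+y_k g_{m_k})$ reproduces $f_m$ term by term, matching both the analytic series $z-\sum|a_k|z^k$ and the co-analytic series $(-1)^{m+i-1}\sum|b_k|z^k$ because of the sign $(-1)^{m+i-1}$ built into $g_{m_k}$.

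Finally, to identify the extreme points I would argue that if any $h_{k_0}$ (or $g_{m_{k_0}}$) were written as $\tfrac12(F+G)$ with $F,G$ distinct members of the class, the uniqueness of the coefficients forced by the previous paragraph would contradict the fact that $h_{k_0}$ corresponds to the distinguished choice $x_{k_0}=1$ (or $y_{k_0}=1$) with all other weights zero, so $\{h_k\}_{k\geq 1}\cup\{g_{m_k}\}_{k\geq 1}$ are exactly the extreme points. The main technical care-point is book-keeping the two sign factors $(-1)^{m+i-1}$ and $(-1)^{n+j-(m+i)}$ consistently, so that the co-analytic coefficients in the convex combination line up with $\overline{g}_m$ rather than with $g_m$; everything else reduces to linear algebra in the coefficient space dictated by Theorem~\ref{thm1}.
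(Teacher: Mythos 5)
Your proposal is correct and follows essentially the same route as the paper: verify the coefficient inequality of Theorem~\ref{thm1}(ii) directly for a convex combination of the $h_k$ and $g_{m_k}$, and conversely recover the weights $x_k,y_k$ from the coefficient bounds, setting $x_1$ to absorb the slack. The only addition beyond the paper's argument is your explicit (and standard) sketch that the $h_k$ and $g_{m_k}$ cannot be proper convex combinations, which the paper leaves implicit.
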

\begin{proof} For a function $f_m$ of the form $f_m(z)=\sum\limits_{k=1}^\infty(x_kh_k(z)+y_kg_{m_k}(z)),$ where $\sum\limits_{k=1}^\infty(x_k+y_k)=1$, we have
	$$f_m(z)=z-\sum\limits_{k=2}^\infty\frac{1-\alpha}{\lambda_k[k]^m_{p,q}-\alpha u_k[k]^n_{p,q}}x_kz^k+\sum\limits_{k=1}^\infty(-1)^{m+i-1}\frac{1-\alpha}{\mu_k[k]^m_{p,q}-(-1)^{n+j-(m+i)}\alpha v_k[k]^n_{p,q}}y_k\overline{z}^k.$$
	Then $f_m \in \operatorname{clco} \mathcal{T}\mathcal{S}_H(m,n,\Phi_i,\Psi_j,p,q,\alpha)$ because
	$$\sum\limits_{k=2}^\infty\frac{\lambda_k[k]^m_{p,q}-\alpha u_k[k]^n_{p,q}}{1-\alpha}\bigg(\frac{1-\alpha}{\lambda_k[k]^m_{p,q}-\alpha u_k[k]^n_{p,q}}x_k\bigg)+$$
	$$\sum\limits_{k=1}^\infty\frac{\mu_k[k]^m_{p,q}-(-1)^{n+j-(m+i)}\alpha v_k[k]^n_{p,q}}{1-\alpha}\bigg(\frac{1-\alpha}{\mu_k[k]^m_{p,q}-(-1)^{n+j-(m+i)}\alpha v_k[k]^n_{p,q}}y_k\bigg)$$
	$$=\sum\limits_{k=2}^\infty x_k+\sum\limits_{k=1}^\infty y_k=1-x_1\leq 1.$$
	
	Conversely, suppose $f_m \in \operatorname{clco}\mathcal{T}\mathcal{S}_H(m,n,\Phi_i,\Psi_j,p,q,\alpha)$. Then
	$$|a_k|\leq \frac{1-\alpha}{\lambda_k[k]^m_{p,q}-\alpha u_k[k]^n_{p,q}}\quad\text{and}\quad |b_k|\leq \frac{1-\alpha}{\mu_k[k]^m_{p,q}-(-1)^{n+j-(m+i)}\alpha v_k[k]^n_{p,q}}.$$
	Set
	$$x_k=\frac{\lambda_k[k]^m_{p,q}-\alpha u_k[k]^n_{p,q}}{1-\alpha}|a_k|\quad\text{and}\quad y_k=\frac{\mu_k[k]^m_{p,q}-(-1)^{n+j-(m+i)}\alpha v_k[k]^n_{p,q}}{1-\alpha}|b_k|.$$
	By Theorem \ref{thm1} (ii), $\sum\limits_{k=2}^\infty x_k+ \sum\limits_{k=1}^\infty y_k\leq 1.$ Therefore we define $x_1=1-\sum\limits_{k=2}^\infty x_k- \sum\limits_{k=1}^\infty y_k\geq 0$. Consequently, we obtain $f_m(z)=\sum\limits_{k=1}^\infty(x_kh_k(z)+y_kg_{m_k}(z))$ as required.
\end{proof}

For functions in the class $\mathcal{T}\mathcal{S}_H(m,n,\Phi_i,\Psi_j,p,q,\alpha)$, the following theorem gives distortion bounds which in turns yields the covering result for this class.

\begin{thm}\label{thm3} Let $f_m\in \mathcal{T}\mathcal{S}_H(m,n,\Phi_i,\Psi_j,p,q,\alpha)$, $\gamma_k=\lambda_k[k]^m_{p,q}-\alpha u_k[k]^n_{p,q}$, $k\geq2$ and $\phi_k= \mu_k[k]^m_{p,q}-(-1)^{n+j-(m+i)}\alpha v_k[k]^n_{p,q} $, $k\geq1$. If $\{\gamma_k\}$ and $\{\phi_k\}$ are non-decreasing sequences, then we have
\begin{equation}\label{eq18}
|f_m(z)|\leq (1+|b_1|)|z|+\frac{1-\alpha}{\beta}\bigg(1-\frac{\mu_1-(-1)^{n+j-(m+i)}\alpha v_1}{\beta}|b_1|\bigg)|z|^2
\end{equation}
and
\begin{equation}\label{eq19}
|f_m(z)|\geq (1-|b_1|)|z|-\frac{1-\alpha}{\beta}\bigg(1-\frac{\mu_1-(-1)^{n+j-(m+i)}\alpha v_1}{\beta}|b_1|\bigg)|z|^2,
\end{equation}
for all $z\in\de$, where $b_1=f_{\overline{z}}(0)$ and
\begin{align*}
\beta&=\min\{\gamma_2, \phi_2\} =\min\{\lambda_2[2]^m_{p,q}-\alpha u_2[2]^n_{p,q}, \mu_2[2]^m_{p,q}-(-1)^{n+j-(m+i)}\alpha v_2[2]^n_{p,q}\}.
\end{align*}
\end{thm}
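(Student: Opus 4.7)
The plan is to apply the triangle inequality directly to the series representation of $f_m = h+\overline{g}_m$ and then invoke Theorem~\ref{thm1}(ii) to control the resulting coefficient sums. For $|z|=r<1$, I would begin with
\[
|f_m(z)| \leq |z|+|b_1||z|+\sum_{k=2}^{\infty}(|a_k|+|b_k|)|z|^k,
\]
and exploit the elementary estimate $|z|^k \leq |z|^2$ (valid because $|z|<1$ and $k\geq 2$) to reduce the upper bound to
\[
|f_m(z)| \leq (1+|b_1|)|z| + |z|^2 \sum_{k=2}^{\infty}(|a_k|+|b_k|).
\]
The whole task then becomes bounding the tail sum $\sum_{k=2}^{\infty}(|a_k|+|b_k|)$ by the bracketed expression in \eqref{eq18}.

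For this, I would rewrite the sharp coefficient inequality from Theorem~\ref{thm1}(ii) in the form
\[
\phi_1|b_1| + \sum_{k=2}^{\infty}\bigl(\gamma_k|a_k|+\phi_k|b_k|\bigr) \leq 1-\alpha,
\]
isolating the $k=1$ term of the $b$-sum because $\phi_1$ is not controlled by $\beta$ in general. The monotonicity hypotheses on $\{\gamma_k\}$ and $\{\phi_k\}$ then give $\gamma_k,\phi_k \geq \beta$ for every $k\geq 2$, so that
\[
\beta \sum_{k=2}^{\infty}(|a_k|+|b_k|) \;\leq\; \sum_{k=2}^{\infty}\bigl(\gamma_k|a_k|+\phi_k|b_k|\bigr) \;\leq\; (1-\alpha)-\phi_1|b_1|.
\]
Dividing by $\beta$ yields the required tail bound, which, after substitution and the routine algebraic rearrangement into the factored form displayed in \eqref{eq18}, produces the claimed upper distortion estimate.

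The lower bound \eqref{eq19} is obtained by the same argument in reverse: the inequality
\[
|f_m(z)| \;\geq\; |z|-|b_1||z|-|z|^2\sum_{k=2}^{\infty}(|a_k|+|b_k|)
\]
combined with the tail estimate above gives \eqref{eq19} verbatim. The main—and quite mild—point of care is the separation of the $\phi_1|b_1|$ term before applying the $\beta$-minorization, which is precisely what produces the $|b_1|$-dependent correction inside the parentheses in \eqref{eq18} and \eqref{eq19}. No deeper obstacle arises, since Theorem~\ref{thm1}(ii) does the heavy lifting and only the monotonicity of $\{\gamma_k\}$, $\{\phi_k\}$ is needed to pass from the weighted to the unweighted tail sum.
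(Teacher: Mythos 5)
Your argument is correct and follows essentially the same route as the paper: triangle inequality on the series, the estimate $|z|^k\le|z|^2$ for $k\ge 2$, and then bounding the tail sum $\sum_{k\ge 2}(|a_k|+|b_k|)$ by minorizing $\gamma_k,\phi_k\ge\beta$ after peeling off the $\phi_1|b_1|$ term from the coefficient inequality of Theorem~\ref{thm1}(ii). The only caveat is that this computation (exactly like the paper's own final display in its proof) produces the factor $1-\frac{\mu_1-(-1)^{n+j-(m+i)}\alpha v_1}{1-\alpha}|b_1|$, i.e.\ with $1-\alpha$ rather than $\beta$ in the inner denominator, so the form printed in \eqref{eq18} and \eqref{eq19} appears to contain a typographical error; your derivation does not literally reach the stated display ``verbatim,'' but the discrepancy lies in the statement, not in your proof.
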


\begin{proof} Let  $f_m\in\mathcal{T}\mathcal{S}_H(m,n,\Phi_i,\Psi_j,p,q,\alpha)$. Taking the absolute value of $f_m$, we obtain
\begin{align*}
		|f_m(z)|&\leq (1+|b_1|)|z|+\sum\limits_{k=2}^\infty(|a_k|+|b_k|)|z|^k\\
		&\leq(1+|b_1|)|z|+\sum\limits_{k=2}^\infty(|a_k|+|b_k|)|z|^2\\
		&\leq (1+|b_1|)|z|+\frac{1-\alpha}{\beta}\sum\limits_{n=2}^\infty\bigg(\frac{\beta}{1-\alpha}|a_k|+\frac{\beta}{1-\alpha}|b_k|\bigg)|z|^2\\
		&\leq (1+|b_1|)|z|+\frac{1-\alpha}{\beta}\sum\limits_{k=2}^\infty\bigg(\frac{\lambda_k[k]^m_{p,q}-\alpha u_k[k]^n_{p,q}}{1-\alpha}|a_k|\\
		&\quad\quad+\frac{\mu_k[k]^m_{p,q}-(-1)^{n+j-(m+i)}\alpha v_k[k]^n_{p,q}}{1-\alpha}|b_k|\bigg)|z|^2\\
		&\leq (1+|b_1|)|z|+\frac{1-\alpha}{\beta}\bigg(1-\frac{\mu_1-(-1)^{n+j-(m+i)}\alpha v_1}{1-\alpha}|b_1|\bigg)|z|^2
\end{align*}
This proves (\ref{eq18}). The proof of (\ref{eq19}) is omitted as it is  similar to the proof of (\ref{eq18}).
\end{proof}
The following covering result follows from  the inequality (\ref{eq19}).

\begin{cor}\label{cor4}  Under the hypothesis of Theorem \ref{thm3}, we have
	$$\bigg\{w: |w|< \frac{1}{\beta}\big(\beta-1+\alpha+(\mu_1-(-1)^{n+j-(m+i)}\alpha v_1-\beta)|b_1|\big)\bigg\}\subset f(\de).$$
\end{cor}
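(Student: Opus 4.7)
The plan is to derive the covering region directly from the lower distortion bound (\ref{eq19}) in Theorem~\ref{thm3}, together with the standard open-mapping argument that applies to sense-preserving harmonic univalent functions. Since any such $f_m$ is an open mapping on $\de$ with $f_m(0)=0$, the image $f_m(\de)$ is an open set containing the origin, and its complement in $\ce$ meets the euclidean ball around $0$ only at radii that arise as cluster values of $|f_m(z)|$ as $|z|\to 1^-$. Hence, if we produce a constant $R>0$ such that $\liminf_{|z|\to 1^-}|f_m(z)|\geq R$, then the disc $\{w:|w|<R\}$ must be contained in $f_m(\de)$.

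The first step is therefore to take $|z|=r<1$ in (\ref{eq19}) and let $r\to 1^{-}$. This yields
\[
\liminf_{|z|\to 1^{-}}|f_m(z)|\;\geq\;(1-|b_1|)-\frac{1-\alpha}{\beta}\bigg(1-\frac{\mu_1-(-1)^{n+j-(m+i)}\alpha v_1}{\beta}|b_1|\bigg),
\]
using that the second term on the right of (\ref{eq19}) is a quadratic in $|z|$ with fixed coefficients depending only on $|b_1|$, $\alpha$, $\beta$, $\mu_1$, $v_1$.

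The second step is purely algebraic: multiply numerator and denominator by $\beta$, distribute, and regroup the constant part and the $|b_1|$-part. After clearing $\beta$, the right-hand side becomes
\[
\frac{1}{\beta}\Bigl(\beta-(1-\alpha)-\beta|b_1|+\bigl(\mu_1-(-1)^{n+j-(m+i)}\alpha v_1\bigr)|b_1|\Bigr),
\]
which is exactly $\tfrac{1}{\beta}\bigl(\beta-1+\alpha+(\mu_1-(-1)^{n+j-(m+i)}\alpha v_1-\beta)|b_1|\bigr)$, the threshold displayed in the corollary. Setting this number equal to $R$ and applying the open-mapping observation from the first paragraph completes the proof.

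The proof therefore has no serious obstacle; it is essentially a corollary of Theorem~\ref{thm3} together with a standard topological fact. The only point that deserves a brief justification is the covering principle itself, namely why $\liminf_{|z|\to 1^{-}}|f_m(z)|\geq R$ implies $\{|w|<R\}\subset f_m(\de)$. I would dispatch this by noting that $f_m$ is sense-preserving and univalent (hence open) on $\de$ by Theorem~\ref{thm1}(i), so $f_m(\de)$ is open; any boundary point of $f_m(\de)$ is a limit of $f_m(z_n)$ with $|z_n|\to 1^{-}$, so its modulus is at least $R$; and since $0\in f_m(\de)$, the connected component of $\{|w|<R\}$ containing $0$ is trapped inside $f_m(\de)$.
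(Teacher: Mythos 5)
Your overall strategy is exactly the one the paper intends: the corollary is presented as an immediate consequence of the lower distortion bound (\ref{eq19}), obtained by letting $|z|\to 1^-$ and invoking the covering principle for open maps. Your justification of that principle (openness of $f_m(\de)$, the fact that boundary points of the image are cluster values of $f_m$ along sequences with $|z_n|\to 1^-$, and connectedness of $\{|w|<R\}$ through $0$) is correct and in fact more careful than anything the paper writes down.

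However, your ``purely algebraic'' second step does not follow from the expression you wrote in your first step. Starting from (\ref{eq19}) as printed, with the inner factor $\frac{\mu_1-(-1)^{n+j-(m+i)}\alpha v_1}{\beta}|b_1|$, letting $|z|\to1^-$ gives
\[
(1-|b_1|)-\frac{1-\alpha}{\beta}+\frac{(1-\alpha)\bigl(\mu_1-(-1)^{n+j-(m+i)}\alpha v_1\bigr)}{\beta^{2}}|b_1|
=\frac{1}{\beta}\Bigl(\beta-1+\alpha+\Bigl(\tfrac{(1-\alpha)(\mu_1-(-1)^{n+j-(m+i)}\alpha v_1)}{\beta}-\beta\Bigr)|b_1|\Bigr),
\]
which coincides with the radius in the corollary only when $\beta=1-\alpha$. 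The displayed radius instead comes from the bound
\[
|f_m(z)|\geq(1-|b_1|)|z|-\frac{1-\alpha}{\beta}\Bigl(1-\frac{\mu_1-(-1)^{n+j-(m+i)}\alpha v_1}{1-\alpha}|b_1|\Bigr)|z|^{2},
\]
that is, with $1-\alpha$ rather than $\beta$ in the inner denominator. This is precisely what the last line of the proof of Theorem \ref{thm3} establishes, since the coefficient inequality (\ref{eq5}) yields $\sum_{k\geq2}(|a_k|+|b_k|)\leq\frac{1-\alpha}{\beta}\bigl(1-\frac{\phi_1}{1-\alpha}|b_1|\bigr)$ with $\phi_1=\mu_1-(-1)^{n+j-(m+i)}\alpha v_1$; the statement of (\ref{eq19}) evidently carries a typo in that denominator. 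Your write-up silently replaces the $\beta$ by $1-\alpha$ between your two displays, so the identity you assert (``which is exactly\dots'') is false as stated. The fix is minor --- work from the corrected form of (\ref{eq19}) and note the discrepancy --- but as written the algebraic step is wrong.
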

\begin{thm} \label{radi} If $f_m\in\mathcal{T}\mathcal{S}_H(m,n,\Phi_i,\Psi_j,p,q,\alpha)$, then  $f_m$ is convex in the disc
$$|z|\leq  \min_{k}\bigg\{\frac{1-b_1}{k[1-\frac{\mu_1-(-1)^{n+j-(m+i)}\alpha v_1}{1-\alpha}b_1]}\bigg\}^{\frac{1}{k-1}},\quad k\geq 2.$$
\end{thm}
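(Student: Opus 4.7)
The plan is to reduce the convexity of $f_m$ on a disc $|z|\leq r$ to a coefficient test, and then to extract the radius $r$ from the already-established coefficient inequality in Theorem~\ref{thm1}(ii). Specifically, I would first invoke the classical sufficient condition for convex harmonic mappings: a sense-preserving harmonic $f_m=h+\overline{g}_m$ of the form (\ref{eq4a}) is convex in $|z|\leq r$ provided
\begin{equation*}
\sum_{k=2}^\infty k(|a_k|+|b_k|)\,r^{k-1}\leq 1-|b_1|.
\end{equation*}
This is the harmonic analogue of the familiar $\sum k|a_k|\leq 1$ type test and is derived by estimating the modulus of $z\,D_{p,p}f_m$ against that of $f_m$ on the circle $|z|=r$ in the spirit of the Clunie--Sheil-Small argument.

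Next I would rewrite Theorem~\ref{thm1}(ii) by pulling the $k=1$ contribution out of the $b_k$-sum, obtaining
\begin{equation*}
\sum_{k=2}^\infty\frac{\gamma_k|a_k|+\phi_k|b_k|}{1-\alpha}\leq 1-\frac{\phi_1}{1-\alpha}|b_1|,
\end{equation*}
with $\gamma_k$ and $\phi_k$ as defined in Theorem~\ref{thm3}. Rescaling by the positive factor $(1-|b_1|)/[1-\phi_1|b_1|/(1-\alpha)]$, my sufficient condition above follows as soon as the term-by-term inequality
\begin{equation*}
k\,r^{k-1}(|a_k|+|b_k|)\leq \frac{(1-|b_1|)\bigl[\gamma_k|a_k|+\phi_k|b_k|\bigr]}{(1-\alpha)\bigl[1-\phi_1|b_1|/(1-\alpha)\bigr]}
\end{equation*}
holds for every $k\geq 2$.

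The final step is routine algebra: using $\gamma_k|a_k|+\phi_k|b_k|\geq \min(\gamma_k,\phi_k)(|a_k|+|b_k|)$ and the non-decreasing hypothesis on $\{\gamma_k\},\{\phi_k\}$ from Theorem~\ref{thm3}, the displayed term-by-term inequality becomes a bound on $r^{k-1}$; solving for $r$ and taking the minimum over $k\geq 2$ reproduces the stated radius. The main obstacle I anticipate is the careful bookkeeping: correctly accommodating the sign $(-1)^{n+j-(m+i)}$ hidden inside $\phi_k$ so that the denominator factor $[1-\phi_1|b_1|/(1-\alpha)]$ emerges as written, and making sure the chosen coefficient test for harmonic convexity is the right one for the form (\ref{eq4a}) with $|b_1|<1$. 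Once those points are pinned down, only routine estimation remains.
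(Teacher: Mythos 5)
Your overall strategy---dilate to $r^{-1}f_m(rz)$, apply a coefficient sufficient condition for harmonic convexity, and feed in the inequality of Theorem~\ref{thm1}(ii)---is the same as the paper's. But the coefficient test you invoke is the wrong one, and this is not a bookkeeping issue: the condition
\[
\sum_{k=2}^\infty k\,(|a_k|+|b_k|)\,r^{k-1}\leq 1-|b_1|
\]
is the sufficient condition for \emph{starlikeness} of harmonic functions with negative coefficients (Jahangiri's $\mathcal{T}\mathcal{S}^\ast_H$ criterion). The criterion for \emph{convexity}, which is what the paper uses, carries $k^2$ rather than $k$:
\[
\sum_{k=2}^\infty k^2(|a_k|+|b_k|)\,r^{k-1}\leq 1-|b_1|,
\]
coming from the characterization of $\mathcal{T}\mathcal{K}_H$ in \cite{Jahangiri1998}. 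That extra factor of $k$ is precisely where the $k$ in the denominator of the stated radius comes from: in the paper's argument one factor of $k$ is absorbed term-by-term via $k(1-\alpha)\leq\gamma_k$ and $k(1-\alpha)\leq\phi_k$, and the remaining factor $k\,r^{k-1}$ is what gets bounded by $(1-b_1)/[1-\phi_1 b_1/(1-\alpha)]$. With your single-$k$ test the same manipulations would give only $r^{k-1}\leq(1-b_1)/[1-\phi_1 b_1/(1-\alpha)]$, i.e.\ a radius \emph{without} the $k$ in the denominator, which does not match the theorem (and would in any case be a radius of starlikeness, not convexity).

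A second, related problem is your final step. Replacing $\gamma_k|a_k|+\phi_k|b_k|$ by $\min(\gamma_k,\phi_k)(|a_k|+|b_k|)\geq\beta(|a_k|+|b_k|)$ and invoking the monotonicity hypothesis of Theorem~\ref{thm3} (which is not a hypothesis of Theorem~\ref{radi}) would produce a radius containing $\beta=\min\{\gamma_2,\phi_2\}$, whereas the stated radius contains no such quantity. The bound actually needed is the $k$-dependent one $\gamma_k,\phi_k\geq k(1-\alpha)$, not a uniform lower bound by $\beta$. So as written, the ``routine algebra'' at the end does not reproduce the stated radius; you need to (a) switch to the $k^2$ convexity test and (b) replace the $\beta$-estimate by the term-wise estimate $\gamma_k|a_k|+\phi_k|b_k|\geq k(1-\alpha)(|a_k|+|b_k|)$ before the claimed formula emerges.
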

\begin{proof} Let $f_m\in\mathcal{T}\mathcal{S}_H(m,n,\Phi_i,\Psi_j,p,q,\alpha)$ and let $r$, $0<r<1$, be fixed. Then $r^{-1}f_m(rz)\in\mathcal{T}\mathcal{S}_H(m,n,\Phi_i,p,q,\alpha)$ and we have
\begin{align*}
 & \sum\limits_{k=2}^\infty k^2(|a_k|+|b_k|)
 =\sum\limits_{k=2}^\infty k(|a_k|+|b_k|)kr^{k-1}\\
&\quad\leq \sum\limits_{k=2}^\infty \bigg(\frac{\lambda_k[k]^m_{p,q}-\alpha u_k[k]^n_{p,q}}{1-\alpha}|a_k|+\frac{\mu_k[k]^m_{p,q}-(-1)^{n+j-(m+i)}\alpha v_k[k]^n_{p,q}}{1-\alpha}|b_k|\bigg)kr^{k-1}\\
&\quad\leq \sum\limits_{k=2}^\infty \bigg(1-\frac{\mu_1-(-1)^{n+j-(m+i)}\alpha v_1}{1-\alpha}|b_1|\bigg)kr^{k-1}\\
&\quad\leq 1-b_1
\end{align*}	
provided
\[ kr^{k-1}\leq \frac{1-b_1}{1-\frac{\mu_1-(-1)^{n+j-(m+i)}\alpha v_1}{1-\alpha}b_1}\] which is true if
\[r\leq \min_{k}\bigg\{\frac{1-b_1}{k[1-\frac{\mu_1-(-1)^{n+j-(m+i)}\alpha v_1}{1-\alpha}b_1]}\bigg\}^{\frac{1}{k-1}},\quad k\geq 2.\qedhere\]	
\end{proof}

\begin{rem} Our results naturally includes several results known for those subclasses of harmonic functions  listed after Definition~\ref{defn2}.
\end{rem}

\end{document}